\documentclass[11pt]{amsart}
\usepackage{amssymb,amsmath,amsthm,latexsym,stmaryrd}


\usepackage[T1]{fontenc}
\usepackage{lmodern}

%
\usepackage[pagebackref,colorlinks=true]{hyperref}  
%

\newtheorem{theorem}{Theorem}[section]

\newtheorem{corollary}[theorem]{Corollary}

\newtheorem{definition}[theorem]{Definition}

\newtheorem{proposition}[theorem]{Proposition}
\newtheorem{remark}[theorem]{Remark}

\newcommand{\ie}{i.\hspace{.5pt}e.\ }
\newcommand{\f}{\varphi}
\newcommand{\g}{\tilde{g}}
\newcommand{\n}{\nabla}
\newcommand{\nn}{\tilde{\n}}
\newcommand{\M}{(M,\A\f,\A\xi,\A\eta,\A{}g)}

\newcommand{\I}{\iota}

\newcommand{\R}{\mathbb R}

\newcommand{\F}{\mathcal{F}}
\newcommand{\LL}{\mathcal{L}}
\newcommand{\ta}{\theta}
\newcommand{\om}{\omega}
\newcommand{\lm}{\lambda}

\newcommand{\al}{\alpha}
\newcommand{\bt}{\beta}

\DeclareMathOperator{\D}{d} 

\DeclareMathOperator{\Div}{div} 
\DeclareMathOperator{\Span}{span} 

\newcommand{\thmref}[1]{Theorem~\ref{#1}}

\newcommand{\corref}[1]{Corollary~\ref{#1}}
\newcommand{\propref}[1]{Proposition~\ref{#1}}
\newcommand{\remref}[1]{Remark~\ref{#1}}


\newcommand{\A}{\allowbreak{}}

\begin{document}

\title{Ricci-like Solitons on Almost Contact B-Metric Manifolds}

\author[M. Manev]{Mancho Manev
}

\address[M. Manev]{University of Plovdiv Paisii Hilendarski,
Faculty of Mathematics and Informatics, Department of Algebra and
Geometry, 24 Tzar Asen St., Plovdiv 4000, Bulgaria
\&
Medical University of Plovdiv, Faculty of Public Health,
Department of Medical Informatics, Biostatistics and E-Learning, 15A Vasil Aprilov Blvd.,
Plovdiv 4002, Bulgaria} 
\email{mmanev@uni-plovdiv.bg}

\begin{abstract}
Ricci-like solitons with potential Reeb vector field are introduced and studied
on almost contact B-metric manifolds. 
The cases of Sasaki-like manifolds and torse-forming potentials have been considered.
In these cases, it is proved that the manifold admits 
a Ricci-like soliton 
if and only if 
the structure is Einstein-like. 
Explicit examples of Lie groups as 3- and 5-dimensional manifolds with the structures studied are 
provided.
\end{abstract}

\subjclass[2010]{Primary 
53C25, 
53D15,  	
53C50; 
Secondary 
53C44,  	
53D35, 
70G45} 

\keywords{Ricci-like soliton, $\eta$-Ricci soliton, Einstein-like manifold, $\eta$-Ein\-stein manifold, almost contact B-metric manifold, almost contact complex Riemannian manifold, torse-forming vector field}

\date{December 17, 2019}

\dedicatory{Dedicated to the memory of the author's irreplaceable colleague and friend \\ Prof. Dimitar Mekerov.}


\maketitle



\section{Introduction}

In differential geometry, the Ricci flow is an intrinsic geometric flow. 
It is a process of deformation of the Riemannian metric, independent of any embedding or immersion. 
The Ricci flow is given by an evolution equation for the metric and the associated Ricci tensor on a Riemannian manifold depending on a variable called time. 
Ricci flows, formally analogous to the diffusion of heat, are among the important tools of theoretical physics. 

In 1982, R.\,S. Hamilton \cite{Ham82} introduced the concept of Ricci flow and proved its
existence. This idea is contributed to the proof of Thurston's Geometrization Conjecture and consequently the Poincar\'e Conjecture.

Ricci soliton is a special solution of the Ricci flow equation and it
moves only by a one-parameter family of diffeomorphism and scaling. 
Ricci solitons represent a natural generalization of Einstein metrics on a Riemannian
manifold, being generalized fixed points of Hamilton's Ricci flow, considered as a dynamical system.

%
%

In 2008, R. Sharma \cite{Shar} initiated the study of Ricci solitons in contact Riemannian geometry, in particular on K-contact manifolds. After that
Ricci solitons have been studied on different kinds of almost contact metric manifolds, e.g.
$\al$-Sasakian \cite{IngBag}, trans-Sasakian \cite{BagIng13}, Kenmotsu
\cite{BagIngAsh13}, \cite{NagPre}, nearly Kenmotsu \cite{AyaYil}, $f$-Kenmotsu \cite{GalCra},  etc. 
In paracontact geometry, Ricci solitons are first investigated 
by G. Calvaruso and D. Perrone \cite{CalPer}.

As a generalization of a Ricci soliton, the notion of $\eta$-Ricci soliton has been introduced by J.\,T. Cho and M. Kimura \cite{ChoKim}.
In the context of
paracontact geometry $\eta$-Ricci solitons have been investigated in 
\cite{Bla15}, \cite{Bla16}, \cite{BlaCra}, \cite{PraHad}.

Although this topic was first studied in Riemannian geometry, 
in recent years Ricci solitons and their generalizations 
have also been studied in pseudo-Riemannian manifolds, 
mostly with Lorentzian metrics 
(\cite{BagIng12}, \cite{Bla16}, \cite{BlaPer}, \cite{BlaPerAceErd}, \cite{Mat}).

Usually, an almost contact manifold is equipped with
a metric called compatible, which can be Riemannian or pseudo-Riemannian. 
Then, the contact endomorphism 
acts as an isometry with respect to the 
metric on the contact distribution.
The alternative possibility is when the contact endomorphism 
acts as an anti-isometry. 
In this case, the metric is necessarily pseudo-Riemannian and it is known as a B-metric.
Other important characteristic of almost contact B-metric structure which differs
it from the metric one is that the associated $(0,2)$-tensor of the B-metric is also a
B-metric. 
Thus, it is obtained an almost contact B-metric manifold.
In another point of view, it is an almost contact complex Riemannian manifold.
The differential geometry of almost contact B-metric manifolds has been studied since 1993 \cite{GaMiGr}, \cite{ManGri93}.

In the present paper, our goal is to introduce and investigate a generalization of the Ricci soliton 
compatible with the almost contact B-metric structure, and its potential to be the Reeb vector field. 
We are studying these objects on some important kinds of manifolds under consideration: Einstein-like, 
Sasaki-like and having a torse-forming Reeb vector field.
In relation with the proved assertions, we comment explicit examples in dimension 3 and 5 of Lie groups 
with the structure studied.

\section{Almost Contact B-Metric Manifolds}

Let us consider an \emph{almost contact B-metric manifold} denoted by $\M$. This means that $M$
is a $(2n+1)$-dimensional differentiable manifold with an almost
contact structure $(\f,\xi,\eta)$, where $\f$ is an endomorphism
of the tangent bundle $TM$, $\xi$ is a Reeb vector field and $\eta$ is its dual contact 1-form. Moreover,
$M$ is equipped with a pseu\-do-Rie\-mannian
metric $g$  of signature $(n+1,n)$, such that the following
algebraic relations are satisfied: \cite{GaMiGr}
\begin{equation}\label{strM}
\begin{array}{c}
\f\xi = 0,\qquad \f^2 = -\I + \eta \otimes \xi,\qquad
\eta\circ\f=0,\qquad \eta(\xi)=1,\\
g(\f x, \f y) = - g(x,y) + \eta(x)\eta(y),
\end{array}
\end{equation}
where $\I
$ is the identity transformation on $\Gamma(TM)$. In the latter equality and further, $x$, $y$, $z$, $w$ will stand for arbitrary elements of $\Gamma(TM)$ or vectors in the tangent space $T_pM$ of $M$ at an arbitrary
point $p$ in $M$.

Some immediate consequences of \eqref{strM} are the following equations
\begin{equation}\label{strM2}
\begin{array}{ll}
g(\f x, y) = g(x,\f y),\qquad &g(x, \xi) = \eta(x),
\\
g(\xi, \xi) = 1,\qquad &\eta(\n_x \xi) = 0,
\end{array}
\end{equation}
where $\n$ is the Levi-Civita connection of $g$.

The associated metric $\g$ of $g$ on $M$ is defined by
\(\g(x,y)=g(x,\f y)+\eta(x)\eta(y)\).  The manifold
$(M,\f,\xi,\eta,\g)$ is also an almost contact B-metric manifold.
The B-metric $\g$ is also of signature $(n+1,n)$.
The Levi-Civita connection of $\g$ is denoted by
$\nn$.

A classification of almost contact B-metric manifolds, which contains eleven basic classes $\F_1$, $\F_2$, $\dots$, $\F_{11}$, is given in
\cite{GaMiGr}. This classification is made with respect
to the tensor $F$ of type (0,3) defined by
\begin{equation}\label{F=nfi}
F(x,y,z)=g\bigl( \left( \nabla_x \f \right)y,z\bigr).
\end{equation}
The following identities are valid:
\begin{equation}\label{F-prop}
\begin{array}{l}
F(x,y,z)=F(x,z,y)\\
\phantom{F(x,y,z)}
=F(x,\f y,\f z)+\eta(y)F(x,\xi,z)
+\eta(z)F(x,y,\xi),\\
F(x,\f y, \xi)=(\n_x\eta)y=g(\n_x\xi,y).
\end{array}
\end{equation}

The special class $\F_0$,
determined by the condition $F=0$, is the intersection of the basic classes and it is known as the
class of the \emph{cosymplectic B-metric manifolds}.

Let $\left\{e_i;\xi\right\}$ $(i=1,2,\dots,2n)$ be a basis of
$T_pM$ and let $\left(g^{ij}\right)$ be the inverse matrix of the
matrix $\left(g_{ij}\right)$ of $g$. Then the following 1-forms
are associated with $F$:
\begin{equation}\label{t}
\theta(z)=g^{ij}F(e_i,e_j,z),\quad
\theta^*(z)=g^{ij}F(e_i,\f e_j,z), \quad \omega(z)=F(\xi,\xi,z).
\end{equation}
These 1-forms are known also as the Lee forms of the considered manifold. Obviously, the identities
$\om(\xi)=0$ and $\ta^*\circ\f=-\ta\circ\f^2$ are always valid.

\subsection{Sasaki-like almost contact B-metric manifolds}

In \cite{IvMaMa45}, it is defined the class of \emph{Sasaki-like spaces} in the set of almost
contact B-metric manifolds (also known as almost contact complex Riemannian manifolds) by the condition its complex cone to be a K\"ahler-Norden manifold.
Then, these Sasaki-like spaces are determined by the condition
\begin{equation}\label{defSl}
\begin{array}{l}
\left(\nabla_x\f\right)y=-g(x,y)\xi-\eta(y)x+2\eta(x)\eta(y)\xi,
\end{array}
\end{equation}
which may be rewritten as $\left(\nabla_x\f\right)y=g(\f x,\f y)\xi+\eta(y)\f^2 x$.

\begin{remark}\label{rem:Sl}
Obviously, the considered manifolds of Sasaki-like type form a subclass of the basic class $\F_4$ 
of the classification in \cite{GaMiGr} and they
have Lee forms of the form $\ta=2n\,\eta$, $\ta^*=\om=0$. 
\end{remark}

Moreover,  
the following identities are valid for them \cite{IvMaMa45}
\begin{equation}\label{curSl} 
\begin{array}{ll}
\n_x \xi=-\f x, \qquad &\left(\n_x \eta \right)(y)=-g(x,\f y),\\
R(x,y)\xi=\eta(y)x-\eta(x)y, \qquad &R(\xi,y)\xi=\f^2y, \\[0pt]
\rho(x,\xi)=2n\, \eta(x),\qquad 				&\rho(\xi,\xi)=2n,
\end{array}
\end{equation}
where $R$ and $\rho$ stand for the curvature tensor and the Ricci tensor.
As a consequence we have
\begin{equation*}
R(\xi,y)z=g(y,z)\xi-\eta(z)y, \qquad \eta\bigl(R(x,y)z\bigr)=\eta(x)g(y,z)-\eta(y)g(x,z);
\end{equation*}
\begin{equation*}
\begin{array}{lll}
\nabla_{\xi}\xi=0, \qquad &\nabla_{\xi}\f=0, \qquad &\n_{\xi} \eta=0,\\
\D\eta=0,\qquad & R(\f x,\f y)\xi=0, \qquad & \rho(\f x,\xi)=0.
\end{array}
\end{equation*}

Obviously, the sectional curvature of an arbitrary $\xi$-section $\al=\Span\{x,\xi\}$ is $k(x,\xi)=1$.

\subsubsection{Example of a Sasaki-like manifold}\label{ex1}
As Example 2 in \cite{IvMaMa45}, it is considered 
the Lie group $G$ of
dimension $5$ 
 with a basis of left-invariant vector fields $\{e_0,\dots, e_{4}\}$
and the corresponding Lie algebra is 
 defined by the commutators
\begin{equation}\label{comEx1}
\begin{array}{ll}
[e_0,e_1] = p e_2 + e_3 + q e_4,\quad &[e_0,e_2] = - p e_1 -
q e_3 + e_4,\\[0pt]
[e_0,e_3] = - e_1  - q e_2 + p e_4,\quad &[e_0,e_4] = q e_1
- e_2 - p e_3,\qquad p,q\in\R.
\end{array}
\end{equation}
Then, $G$ is equipped with an almost contact B-metric structure by
\begin{equation}\label{strEx1}
\begin{array}{l}
g(e_0,e_0)=g(e_1,e_1)=g(e_2,e_2)=-g(e_{3},e_{3})=-g(e_{4},e_{4})=1,
\\[0pt]
g(e_i,e_j)=0,\quad
i,j\in\{0,1,\dots,4\},\; i\neq j,
\\[0pt]
\xi=e_0, \quad \f  e_1=e_{3},\quad  \f e_2=e_{4},\quad \f  e_3=-e_{1},\quad \f  e_4=-e_{2}.
\end{array}
\end{equation}

The components of the
Levi-Civita connection are the following
\begin{equation}\label{nijEx1}
\begin{array}{c}
\begin{array}{ll}
\n_{e_0} e_1 = p e_2 + q e_4,\qquad &
\n_{e_0} e_2 = - p e_1 - q e_3, \\
\n_{e_0} e_3 = - q e_2 + p e_4,\qquad &
\n_{e_0} e_4 = q e_1 - p e_3,
\end{array}
\\
\begin{array}{c}
\n_{e_1}e_0 = - e_3,\quad
\n_{e_2} e_0 = - e_4,\quad
\n_{e_3} e_0 = e_1,\quad
\n_{e_4}e_0 =e_2,
\end{array}
\\
\begin{array}{c}
\n_{e_1}e_3 = \n_{e_2} e_4 = \n_{e_3} e_1 = \n_{e_4}e_2 = - e_0.
\end{array}
\end{array}
\end{equation}

It is verified that the constructed almost contact B-metric manifold
$(G,\f,\allowbreak{}\xi,\allowbreak{}\eta,\allowbreak{}g)$ is Sasaki-like.

\subsection{Einstein-like almost contact B-metric manifolds}

Let us introduce the following
\begin{definition}
An almost contact B-metric manifold $\M$ is said to be
\emph{Einstein-like} if its Ricci tensor $\rho$ satisfies
\begin{equation}\label{defEl}
\begin{array}{l}
\rho(x,y)=a\,g(x,y) +b\,\g(x,y) +c\,\eta(x)\eta(y)
\end{array}
\end{equation}
for some triplet of constants $(a,b,c)$.
\end{definition}

In particular, when $b=0$ and $b=c=0$, the manifold is called an \emph{$\eta$-Einstein manifold} and an \emph{Einstein manifold}, respectively. Other particular cases of Einstein-like types of the considered manifolds are given in \cite{ManNak15} and \cite{HMMek}.

As a consequence of \eqref{defEl} we obtain that the corresponding scalar curvature is the following constant 
\begin{equation}\label{tauEl}
\begin{array}{l}
\tau=(2n+1)a +b +c.
\end{array}
\end{equation}

\begin{remark}
The Reeb vector field $\xi$ of an Einstein-like $\M$ is an eigenvector 
of the Ricci operator $Q=\rho^{\sharp}$, \ie $g(Qx,y)=\rho(x,y)$,
with the corresponding eigenvalue $a+b+c$.  
\end{remark}

Then, applying \eqref{strM} and \eqref{strM2} into \eqref{defEl}, we obtain the truthfulness of the following 
\begin{proposition}
The Ricci tensor $\rho$ of an Einstein-like manifold $\M$ has the following properties:
\begin{equation}\label{roEl1}
\begin{array}{l}
\begin{array}{l}
\rho(\f x,\f y)=-\rho(x,y)+(a+b+c)\eta(x)\eta(y),
\end{array}\\
\begin{array}{ll}
\rho(\f x,y)=\rho(x,\f y), \qquad  & \rho(\f x,\xi)=0, 
\\
\rho(x,\xi)=(a+b+c)\eta(x), \qquad  & \rho(\xi,\xi)=a+b+c,
\end{array}
\end{array}
\end{equation}
\begin{equation}\label{roEl2}
\begin{array}{l}
\left(\n_x\rho\right)(y,z)=b\,g\bigl(\left(\n_x\f\right)y,z\bigr) \\
\phantom{\left(\n_x\rho\right)(y,z)=}
+(b+c)\left\{g(\n_x \xi,y)\eta(z) +g(\n_x \xi,z)\eta(y)\right\}.
\end{array}
\end{equation}
\end{proposition}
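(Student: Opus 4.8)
The plan is to treat the two displays separately: deriving \eqref{roEl1} by purely algebraic substitution into \eqref{defEl} and \eqref{roEl2} by covariant differentiation, using throughout that the coefficients $(a,b,c)$ are constants. For \eqref{roEl1} I would insert the various combinations of $\f x$, $\f y$, $\xi$ into \eqref{defEl} and simplify the three terms $g$, $\g$, $\eta\otimes\eta$ term by term. The facts that carry the computation are $\eta\circ\f=0$ and $\f\xi=0$, which annihilate every $\eta\otimes\eta$ contribution whenever a $\f$ or a $\xi$ meets an $\eta$, together with $g(\f x,\f y)=-g(x,y)+\eta(x)\eta(y)$ and $\f^2=-\I+\eta\otimes\xi$ from \eqref{strM}, and the symmetry $g(\f x,y)=g(x,\f y)$ from \eqref{strM2}. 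As a preliminary I would record the quick consequences $\g(\f x,\f y)=-\g(x,y)+\eta(x)\eta(y)$, $\g(\f x,y)=\g(x,\f y)$ and $\g(x,\xi)=\eta(x)$, each obtained from the definition of $\g$ and \eqref{strM}. Substituting these then gives the first line of \eqref{roEl1}; the symmetry $\rho(\f x,y)=\rho(x,\f y)$ follows because both $g$ and $\g$ move $\f$ freely between their arguments while the $\eta$-term vanishes on $\f$; and contracting with $\xi$ via $g(x,\xi)=\eta(x)$ and $\g(x,\xi)=\eta(x)$ yields $\rho(x,\xi)=(a+b+c)\eta(x)$, whence $\rho(\xi,\xi)=a+b+c$.

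For \eqref{roEl2} I would differentiate \eqref{defEl} covariantly with $\n$. Since $a,b,c$ are constant and $\n g=0$, only the $\g$- and $\eta\otimes\eta$-terms survive, so $(\n_x\rho)(y,z)=b\,(\n_x\g)(y,z)+c\left\{(\n_x\eta)(y)\,\eta(z)+\eta(y)\,(\n_x\eta)(z)\right\}$. The core step is $(\n_x\g)(y,z)$: expanding $\g(y,z)=g(y,\f z)+\eta(y)\eta(z)$ and using $\n g=0$ with $\n_x(\f z)=(\n_x\f)z+\f\n_x z$, the $\f$-terms collapse to $g(y,(\n_x\f)z)$ while the remaining factors assemble into $(\n_x\eta)(y)\eta(z)+\eta(y)(\n_x\eta)(z)$. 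I would then invoke $(\n_x\eta)(y)=g(\n_x\xi,y)$ from \eqref{F-prop} and rewrite $g(y,(\n_x\f)z)=g((\n_x\f)y,z)$ through the symmetry $F(x,y,z)=F(x,z,y)$ of \eqref{F-prop}, namely $g(y,(\n_x\f)z)=F(x,z,y)=F(x,y,z)=g((\n_x\f)y,z)$. Collecting the $\g$- and $\eta$-contributions with their coefficients produces the factor $b$ on $g((\n_x\f)y,z)$ and $b+c$ on the two $\eta$-terms, which is exactly \eqref{roEl2}.

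The bookkeeping is routine, and the only genuinely non-automatic move is this last rewriting $g(y,(\n_x\f)z)=g((\n_x\f)y,z)$. It cannot be justified by symmetry of $g$ alone, since that merely gives $g((\n_x\f)z,y)=F(x,z,y)$; it is the symmetry of $F$ in its final two slots, \eqref{F-prop}, that lets one transpose the roles of $y$ and $z$ inside $(\n_x\f)$ and thereby land the derivative of $\g$ in the stated normal form with $(\n_x\f)y$ in the first argument. I would therefore single out this identity as the step deserving care, the rest being direct substitution and the product rule.
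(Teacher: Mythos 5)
Your proposal is correct and follows essentially the same route as the paper, which simply asserts that the proposition follows by applying \eqref{strM} and \eqref{strM2} to \eqref{defEl}; your write-up supplies the omitted details, including the covariant differentiation for \eqref{roEl2}. You rightly flag the one step needing an extra ingredient, namely $g\bigl(y,(\n_x\f)z\bigr)=g\bigl((\n_x\f)y,z\bigr)$ via the symmetry $F(x,y,z)=F(x,z,y)$ from \eqref{F-prop}, which the paper uses implicitly.
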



\begin{proposition}
Let $\M$, $\dim M=2n+1$, have a scalar curvature $\tau$ and let the manifold be Sasaki-like and Einstein-like with a triplet of constants $(a,b,c)$. 
Then we have
\begin{equation}\label{tauElSl2}
a+b+c=2n,\qquad \tau=2n(a+1),
\end{equation}
\begin{equation}\label{roElSl}
\begin{array}{l}
\left(\n_x\rho\right)(y,z)=-(b+c)\left\{g(x,\f y)\eta(z) +g(x,\f z)\eta(y)\right\}\\
\phantom{\left(\n_x\rho\right)(y,z)=}{}
+b\left\{g(\f x,\f y)\eta(z) +g(\f x,\f z)\eta(y)\right\}.
\end{array}
\end{equation}
\begin{equation}\label{roElSl3}
\begin{array}{c}
a=2n+\frac{1}{2n}(\Div^*{\rho})(\xi),\qquad
b=-\frac{1}{2n}(\Div{\rho})(\xi),\\
c=\frac{1}{2n}\bigl\{(\Div{\rho})(\xi)-(\Div^*{\rho})(\xi)\bigr\},
\end{array}
\end{equation}
where $\Div$ and $\Div^*$ denote the divergence with respect to $g$ and $\g$, respectively.
\end{proposition}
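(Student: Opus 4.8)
The plan is to derive the three conclusions in sequence, using the Sasaki-like identities \eqref{curSl} together with the Einstein-like structure equation \eqref{defEl} and its consequences \eqref{roEl1}--\eqref{roEl2}. For the first claim \eqref{tauElSl2}, I would simply combine the two evaluations of the Ricci tensor on $\xi$: the Einstein-like relation \eqref{roEl1} gives $\rho(\xi,\xi)=a+b+c$, while the Sasaki-like curvature formula \eqref{curSl} gives $\rho(\xi,\xi)=2n$. Equating these yields $a+b+c=2n$, and substituting into the scalar-curvature expression \eqref{tauEl}, $\tau=(2n+1)a+b+c=2na+(a+b+c)=2na+2n=2n(a+1)$, gives the second equality. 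This step is essentially immediate.

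For \eqref{roElSl}, the strategy is to specialize the general covariant-derivative formula \eqref{roEl2} to the Sasaki-like case. The key substitutions are the two structural identities available here: from \eqref{curSl} we have $\n_x\xi=-\f x$, hence $g(\n_x\xi,y)=-g(\f x,y)=-g(x,\f y)$ by the symmetry in \eqref{strM2}, which converts the $(b+c)$-terms of \eqref{roEl2} into the first line of \eqref{roElSl}. For the $b$-term, I would insert the defining equation \eqref{defSl} of a Sasaki-like manifold, $\left(\n_x\f\right)y=-g(x,y)\xi-\eta(y)x+2\eta(x)\eta(y)\xi$, into $g\bigl(\left(\n_x\f\right)y,z\bigr)$; after contracting with $z$ and using $g(\xi,z)=\eta(z)$ together with $g(\f x,\f y)=-g(x,y)+\eta(x)\eta(y)$ from \eqref{strM}, the resulting expression should collapse exactly to $b\{g(\f x,\f y)\eta(z)+g(\f x,\f z)\eta(y)\}$. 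The alternative form $\left(\n_x\f\right)y=g(\f x,\f y)\xi+\eta(y)\f^2x$ noted just after \eqref{defSl} may make this symmetrization more transparent, so I would keep both forms in hand.

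The third and most delicate claim \eqref{roElSl3} recovers the constants $a$, $b$, $c$ from the two divergences of $\rho$ evaluated at $\xi$. The plan is to compute $(\Div\rho)(\xi)=g^{ij}\left(\n_{e_i}\rho\right)(e_j,\xi)$ and $(\Div^*\rho)(\xi)=\g^{ij}\left(\n_{e_i}\rho\right)(e_j,\xi)$ directly from \eqref{roElSl}, contracting the two-index formula against $g^{-1}$ and against $\g^{-1}$ respectively. The main obstacle I anticipate is bookkeeping the trace terms correctly: one must contract $g(x,\f y)$, $g(\f x,\f y)$, and the $\eta$-factors, using $g^{ij}g_{ij}=2n+1$ restricted appropriately to the contact distribution and the fact that $\tr\f=0$ together with $g^{ij}g(\f e_i,\f e_j)=-g^{ij}g(e_i,e_j)+\eta(e_i)\eta(e_j)g^{ij}=-(2n+1)+1=-2n$. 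For the $\g$-divergence I would use $\g^{ij}$, the inverse of $\g(x,y)=g(x,\f y)+\eta(x)\eta(y)$, and exploit that $\f$ acts as an anti-isometry so that the associated contractions differ from the $g$-traces by controlled sign and $\f$-twist. Once both scalars $(\Div\rho)(\xi)$ and $(\Div^*\rho)(\xi)$ are expressed as explicit linear combinations of $b$ and $c$ (the $a$-dependence enters only through the first relation $a+b+c=2n$), the three displayed formulas follow by solving the resulting linear system, with $a$ eliminated via \eqref{tauElSl2}. The careful handling of the $\g$-trace against the anti-isometry property is where I expect the real work to lie.
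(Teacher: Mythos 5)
Your proposal is correct and follows the paper's proof essentially verbatim: equating $\rho(\xi,\xi)$ from \eqref{roEl1} and \eqref{curSl} plus \eqref{tauEl} for the first display, substituting \eqref{defSl} and $\n_x\xi=-\f x$ into \eqref{roEl2} for the second, and tracing \eqref{roElSl} over $x$ and $y$ with respect to both metrics (using $\tr\f=0$, $g^{ij}g(\f e_i,\f e_j)=-2n$ and the expression of $\g^{ij}$ through $\f$ and $g^{ij}$) for the third. The trace computations you flag as the delicate point do close exactly as you anticipate, yielding $(\Div\rho)(\xi)=-2nb$ and $(\Div^*\rho)(\xi)=-2n(b+c)$, from which \eqref{roElSl3} follows.
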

\begin{proof}
Bearing in mind the last equalities of \eqref{curSl} and \eqref{roEl1}, as well as \eqref{tauEl}, we obtain \eqref{tauElSl2}.
The expression \eqref{roElSl} follows from \eqref{defSl} and \eqref{roEl2}.
Taking the traces over $x$ and $y$ in \eqref{roElSl} with respect to both metrics and using $\g^{ij}=-\f^i_kg^{ik}+\xi^i\xi^j$,
we get \eqref{roElSl3}.
\end{proof}

\begin{corollary}\label{cor:ElSl}
 Let $\M$ be Sasaki-like and Einstein-like. Then we have:
\begin{enumerate}
\item[(i)] It is scalar-flat if and only if $a=-1$.  

\item[(ii)] It is Ricci-symmetric if and only if it is an Einstein manifold.  

\item[(iii)] Its Ricci tensor  is $\eta$-parallel  and parallel along $\xi$. 

\item[(iv)] It is $\eta$-Einstein if and only if $\Div Q\in\ker\eta$.

\item[(v)] It is Einstein if and only if $\Div{Q}, \Div^*{Q}\in\ker\eta$.

\end{enumerate}
\end{corollary}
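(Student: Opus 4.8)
The plan is to read off all five items from the three conclusions \eqref{tauElSl2}, \eqref{roElSl} and \eqref{roElSl3} of the preceding proposition, so that each assertion reduces to detecting when a combination of the constants $a,b,c$ vanishes; recall that by definition $\eta$-Einstein means $b=0$ and Einstein means $b=c=0$, and that $n\geq 1$. Item (i) is then immediate, since $\tau=2n(a+1)$ gives $\tau=0$ precisely when $a=-1$.

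For item (ii) the manifold is Ricci-symmetric, \ie $\n\rho=0$. The implication from Einstein is trivial: if $b=c=0$ the whole right-hand side of \eqref{roElSl} vanishes. For the converse I would test \eqref{roElSl} on $z=\xi$ and a horizontal $y$ with $\eta(y)=0$; using $\f\xi=0$, $\eta(\xi)=1$ and $g(\f x,\f y)=-g(x,y)$ for $\eta(y)=0$ from \eqref{strM}, the vanishing of $(\n_x\rho)(y,\xi)$ becomes $g\bigl(x,(b+c)\f y+b\,y\bigr)=0$ for all $x$. Nondegeneracy of $g$ forces $(b+c)\f y+b\,y=0$, and since $\f^2=-\I$ on $\ker\eta$ the vectors $y$ and $\f y$ are linearly independent for $y\neq 0$; hence $b=0$ and $b+c=0$, that is $b=c=0$.

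Item (iii) is obtained by substitution into \eqref{roElSl}. Every term on its right-hand side carries a factor $\eta(y)$ or $\eta(z)$, so the expression vanishes whenever $y,z\in\ker\eta$ (equivalently after replacing $y,z$ by $\f y,\f z$, as $\eta\circ\f=0$), which is exactly $\eta$-parallelism of $\rho$. For parallelism along $\xi$ I set $x=\xi$ and use $g(\xi,\f\,\cdot)=\eta(\f\,\cdot)=0$ together with $\f\xi=0$, so both braces collapse and $\n_\xi\rho=0$.

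Items (iv) and (v) are algebraic consequences of \eqref{roElSl3}. The auxiliary fact needed is the passage between the Ricci operator and the Ricci tensor: contracting $\rho(x,y)=g(Qx,y)$ yields $g(\Div Q,z)=(\Div\rho)(z)$, and since $\eta=g(\,\cdot\,,\xi)$ this gives $\eta(\Div Q)=(\Div\rho)(\xi)$ and likewise $\eta(\Div^*Q)=(\Div^*\rho)(\xi)$ for the $\g$-divergence. Then $b=-\frac{1}{2n}(\Div\rho)(\xi)$ shows $\Div Q\in\ker\eta\iff b=0$, which is (iv); and, once $b=0$, the formula for $c$ reduces to $c=-\frac{1}{2n}(\Div^*\rho)(\xi)$, so $\Div^*Q\in\ker\eta\iff c=0$, giving (v). The one point requiring care is this last step: fixing the convention by which the $\g$-divergence $\Div^*$ turns the $1$-form $\Div^*\rho$ into the vector field $\Div^*Q$ and checking that $\eta$ pairs with it to reproduce $(\Div^*\rho)(\xi)$; everything else is direct substitution into \eqref{roElSl} or rearrangement of \eqref{roElSl3}.
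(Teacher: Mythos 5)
Your proof is correct and follows essentially the same route as the paper: each item is read off from \eqref{tauElSl2}, \eqref{roElSl} and \eqref{roElSl3}, with (iv)--(v) translated via $(\Div\rho)(z)=g(\Div Q,z)$. You merely supply the details (notably the linear-independence of $y$ and $\f y$ on $\ker\eta$ in (ii)) that the paper leaves implicit.
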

\begin{proof}
The equivalence in (i) is obvious from \eqref{tauElSl2},
and (ii) is easily deduced from \eqref{roElSl}. 
The assertion (iii) is valid since \eqref{roElSl} yields $\left(\n\rho\right)|_{\ker\eta}=0$ and  $\n_\xi\rho=0$.
The statement in (iv) is true because of the value of $b$ in \eqref{roElSl3} and the identities 
$\left(\n_x \rho\right)(y,z)=g\left(\left(\n_x Q\right)y,z\right)$
and
$(\Div\rho)(z)=g(\Div Q,z)$. Similarly, we establish the truthfulness of (v).
\end{proof}

Bearing in mind \eqref{tauElSl2}, we obtain immediately that the scalar curvature 
of any Einstein Sasaki-like manifold $\M$ is $\tau=2n(2n+1)$.

\subsubsection{Example of an Einstein-like manifold}\label{ex1-E}
We consider the example of almost contact B-metric manifold $(G,\f,\allowbreak{}\xi,\allowbreak{}\eta,\allowbreak{}g)$ from \S\,\ref{ex1}.

Taking into account \eqref{comEx1}, \eqref{strEx1} and \eqref{nijEx1}, 
we compute the components of the curvature tensor $R_{ijkl}=R(e_i,e_j,e_k,e_l)$ 
and those of the Ricci tensor  $\rho_{ij}=\rho(e_i,e_j)$.
The non-zero of them are determined by the following equalities  and the property $R_{ijkl}=-R_{jikl}=-R_{ijlk}$: 
\begin{equation}\label{Rex1}
\begin{array}{l}
R_{0110}=R_{0220}=-R_{0330}=-R_{0440}=1,\\
R_{1234}=R_{1432}=R_{2341}=R_{3412}=1,\\
R_{1331}=R_{2442}=1,\qquad
\rho_{00}=4.
\end{array}
\end{equation}

Bearing in mind \eqref{defEl}, we establish that $(G,\f,\allowbreak{}\xi,\allowbreak{}\eta,\allowbreak{}g)$
is $\eta$-Einstein, because we obtain $\rho=4\eta\otimes\eta$ and the constants are 
\begin{equation}\label{abcS}
(a,b,c)=(0,0,4).
\end{equation}

\subsection{Almost contact B-metric manifolds with a torse-forming Reeb vector field}\label{sec:tf}

Let us recall, a vector field $\xi$ is called \emph{torse-forming} if $\n_x \xi=f\,x+\al(x)\xi$, where $f$ is a smooth function and $\al$ is an 1-form on the manifold. Taking into account the last equality in \eqref{strM2}, the 1-form $\al$ on an almost contact B-metric manifold $\M$ is  determined by $\al=-f\,\eta$ and then we have the following mutually equivalent equalities
\begin{equation}\label{tf}
\begin{array}{l}
		\n_x \xi=-f\,\f^2x,\qquad \left(\n_x \eta \right)(y)=-f\,g(\f x,\f y).
		\end{array}
\end{equation}

If $f = 0$, the vector field $\xi$ in \eqref{tf} is
a parallel vector field. 
This case is trivial and we omit it in our considerations, \ie we assume that $f$ is not identically zero.


Conditions \eqref{tf} imply $\ta^*(\xi)=2n\,f$ and $\ta(\xi)=\om=0$, using \eqref{F-prop} and \eqref{t}.
Then, taking into account the components of $F$ in the basic classes $\F_i$, $i\in\{1,\dots,11\}$, given in \cite{HM1}, 
we deduce
\begin{remark}\label{rem:tf}
Manifolds $\M$ with torse-forming $\xi$ belong to 
the class $\F_1\oplus\F_2\oplus\F_3\oplus\F_5\oplus\F_6\oplus\F_{10}$. 
Among the basic classes, only $\F_5$ can contain such manifolds.
\end{remark}

Let us note that manifolds of the class $\F_5$ are counterparts of $\bt$-Kenmotsu manifolds in the case of almost contact metric manifolds.

If $\xi$ is torse-forming for $\M\in\F_5$, then it is valid the following
\begin{equation}\label{tfF5}
\begin{array}{l}
		\left(\n_x \f \right)y=-f\{g(x,\f y)\xi+\eta(y)\f x\}.
\end{array}
\end{equation}

Taking into account \remref{rem:Sl} and \remref{rem:tf}, we obtain the following
\begin{proposition}
The Reeb vector field $\xi$ of any Sasaki-like manifold $\M$ is not a torse-forming vector field.
\end{proposition}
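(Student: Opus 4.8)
The plan is to argue by contradiction. Suppose that on a Sasaki-like manifold $\M$ the Reeb vector field $\xi$ were torse-forming with $f\not\equiv 0$. Then $\n_x\xi$ could be computed in two ways: the Sasaki-like identities in \eqref{curSl} give $\n_x\xi=-\f x$, while the torse-forming identities in \eqref{tf} give $\n_x\xi=-f\,\f^2 x$. Equating these two expressions for every $x\in\Gamma(TM)$ yields the pointwise operator identity $\f x=f\,\f^2 x$.

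To extract a contradiction, I would restrict this identity to the contact distribution $\ker\eta$. For $x\in\ker\eta$ the structure relation $\f^2=-\I+\eta\otimes\xi$ reduces to $\f^2 x=-x$, so the identity becomes $\f x=-f\,x$. Applying $\f$ once more and using $\f^2 x=-x$ again gives $-x=f^2 x$, that is $(1+f^2)x=0$ for every $x\in\ker\eta$. Since $\ker\eta$ is a $2n$-dimensional distribution with $n\ge 1$, this forces $1+f^2=0$, which is impossible for a real-valued function. Hence $f$ would have to vanish identically, contradicting the standing assumption $f\not\equiv 0$.

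A shorter route, closest to the phrasing preceding the statement, is to compare Lee forms: by \remref{rem:Sl} a Sasaki-like manifold has $\ta=2n\,\eta$, so $\ta(\xi)=2n\neq 0$, whereas the computation leading to \remref{rem:tf} shows that a torse-forming $\xi$ satisfies $\ta(\xi)=0$; these cannot hold simultaneously. Equivalently, one may invoke the class decomposition directly: \remref{rem:Sl} places Sasaki-like manifolds in $\F_4$, while \remref{rem:tf} places manifolds with torse-forming $\xi$ in $\F_1\oplus\F_2\oplus\F_3\oplus\F_5\oplus\F_6\oplus\F_{10}$; since $\F_4$ does not occur among these summands and the $\F_i$ are independent components of the tensor $F$, membership in both spaces forces $F=0$, i.e. the cosymplectic class $\F_0$, again contradicting $\ta=2n\,\eta\neq 0$.

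I do not expect any serious obstacle here: the entire argument is a one-line comparison of the two prescribed forms of $\n_x\xi$. The only point requiring minor care is the algebraic reduction on $\ker\eta$ (equivalently, the trace values $\tr\f=0$ and $\tr\f^2=-2n$, which turn $\f x=f\,\f^2 x$ into $0=-2nf$ and hence $f=0$), or, in the Lee-form version, the structural fact that the basic classes $\F_i$ are genuine direct summands, so that simultaneous membership in $\F_4$ and in the complementary sum is incompatible unless $F=0$.
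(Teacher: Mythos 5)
Your proof is correct, and your primary route is genuinely different from the paper's. The paper gives no computation at all: it simply juxtaposes \remref{rem:Sl} (Sasaki-like manifolds lie in $\F_4$, with $\ta=2n\,\eta$) and \remref{rem:tf} (a torse-forming $\xi$ forces the manifold into $\F_1\oplus\F_2\oplus\F_3\oplus\F_5\oplus\F_6\oplus\F_{10}$), which is exactly your ``shorter route''; since $\F_4$ is not a summand of that space, simultaneous membership would force $F=0$, incompatible with $\ta=2n\,\eta\neq0$. Your first argument --- equating $\n_x\xi=-\f x$ from \eqref{curSl} with $\n_x\xi=-f\,\f^2x$ from \eqref{tf} and restricting to $\ker\eta$ to get $f^2=-1$ --- is more elementary and self-contained: it avoids any appeal to the classification of \cite{GaMiGr} and the component formulas of \cite{HM1}, and it is actually slightly stronger, since the pointwise contradiction $1+f^2=0$ rules out the torse-forming condition even without the standing assumption $f\not\equiv0$ (whereas the trace variant $0=-2nf$ only yields $f=0$ and then needs that assumption, or the observation that $f=0$ would force $\f|_{\ker\eta}=0$, contradicting $\f^2=-\I$ there). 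The classification route, on the other hand, is the one that generalizes: it locates torse-forming $\xi$ relative to all eleven basic classes at once, which is what the paper needs elsewhere (e.g.\ for the $\F_5$ discussion). Both arguments are sound; the only point to state carefully in the class-theoretic version is that the $\F_i$ are independent direct summands, so the intersection of $\F_4$ with the complementary sum is $\{0\}$ --- which you do note.
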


From \eqref{roEl2} and \eqref{tf}, similarly to (ii) from \corref{cor:ElSl}, we get
\begin{corollary} Let $\M$ with torse-forming $\xi$ be Einstein-like. 
Then, it is Ricci-symmetric if and only if it is an Einstein manifold.  
\end{corollary}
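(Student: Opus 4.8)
The plan is to mirror part (ii) of \corref{cor:ElSl}, but starting from the general Einstein-like identity \eqref{roEl2} and feeding in the torse-forming condition \eqref{tf} in place of the Sasaki-like formula \eqref{defSl}. Since \eqref{roEl2} holds on \emph{every} Einstein-like manifold, I would first rewrite its second term via $g(\n_x\xi,y)=-f\,g(\f x,\f y)$ from \eqref{tf}. The ``if'' direction is then immediate: if the manifold is Einstein, then $b=c=0$, both terms on the right-hand side of \eqref{roEl2} carry a factor $b$ or $b+c$ and hence vanish, so $\n\rho=0$ and the manifold is Ricci-symmetric.

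For the converse I would assume $\n\rho=0$ and extract $b=c=0$ by specializing the arguments. The natural choice is $z=\xi$ and $y\in\ker\eta$. The only delicate term is the first one, $b\,g\bigl((\n_x\f)y,\xi\bigr)$; expanding $(\n_x\f)y=\n_x(\f y)-\f(\n_x y)$ and using $\eta\circ\f=0$ together with the torse-forming formula $\n_x\xi=-f\,\f^2x$, one finds $g\bigl((\n_x\f)y,\xi\bigr)=-g(\f y,\n_x\xi)=-f\,g(x,\f y)$. With $\eta(y)=0$ and $\eta(\xi)=1$, the identity \eqref{roEl2} then collapses to
\begin{equation*}
0=-b\,f\,g(x,\f y)-(b+c)\,f\,g(\f x,\f y),\qquad y\in\ker\eta,
\end{equation*}
which is the torse-forming analogue of \eqref{roElSl}. (In the particular case $\M\in\F_5$ this same relation follows at once from \eqref{tfF5}, but the route through \eqref{F-prop} avoids invoking any restriction on the class.)

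To finish I would use $g(\f x,\f y)=g(x,\f^2y)=-g(x,y)$ for $y\in\ker\eta$ by \eqref{strM}, rewriting the displayed equation as $f\,g\bigl(x,-b\,\f y+(b+c)y\bigr)=0$ for all $x$. Since $f$ is not identically zero, on the open set where $f\neq0$ the nondegeneracy of $g$ forces $-b\,\f y+(b+c)y=0$ for every $y\in\ker\eta$. As $\f$ restricts to an almost complex structure on $\ker\eta$ (so $y$ and $\f y$ are linearly independent whenever $y\neq0$), this yields $b=0$ and $b+c=0$, hence $b=c=0$; being constants, they vanish everywhere, so the manifold is Einstein.

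The main obstacle I anticipate is the bookkeeping for the term $g\bigl((\n_x\f)y,\xi\bigr)$: unlike the Sasaki-like case, where \eqref{defSl} supplies $\n_x\f$ explicitly, here one must pass through \eqref{F-prop} (or the direct computation with $\eta(\f\cdot)=0$) to re-express it through $\n_x\xi$, and then pick the specialization of $(x,y,z)$ that cleanly decouples the coefficient $b$ from the combination $b+c$ so that both can be read off from the nondegeneracy of $g$.
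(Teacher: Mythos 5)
Your proof is correct and follows the same route the paper sketches: combine \eqref{roEl2} with \eqref{tf} and argue as in part (ii) of \corref{cor:ElSl}. The paper gives no details beyond that one-line reference, and your careful evaluation of the only nontrivial term, $b\,g\bigl(\left(\n_x\f\right)y,\xi\bigr)=-f\,g(x,\f y)$, using $\eta\circ\f=0$ and $\n_x\xi=-f\f^2x$ rather than the $\F_5$-specific formula \eqref{tfF5}, correctly fills in the step the paper leaves implicit.
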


\subsubsection{Example of an almost contact B-metric manifold with a torse-forming Reeb vector field}\label{ex2}

In \cite{HMMek}, it is given a 3-dimensional  $\F_5$-manifold on a Lie group $L$ with
 a basis $\{e_0, e_1, e_2\}$ of left invariant vector fields. 
The corresponding Lie algebra and the almost contact B-metric structure are determined as follows:
\begin{equation}\label{strL}
\begin{array}{c}
[e_0,e_1]=p e_1, \qquad [e_0,e_2]=p e_2, \qquad
[e_1,e_2]=0,\qquad p\in\R,\\
\begin{array}{l}
\f e_0=0,\qquad \f e_1=e_{2},\qquad \f e_{2}=- e_1,\qquad \xi=
e_0,\\
\eta(e_0)=1,\qquad \eta(e_1)=\eta(e_{2})=0,
\end{array}\\
\begin{array}{l}
  g(e_0,e_0)=g(e_1,e_1)=-g(e_{2},e_{2})=1, \\
  g(e_0,e_1)=g(e_0,e_2)=g(e_1,e_2)=0.
\end{array}
\end{array}
\end{equation}

The obtained manifold $(L,\f,\xi,\eta,g)$ is characterized as an $\F_5$-manifold by the following non-zero components
\begin{equation}\label{FijkF5}
\begin{array}{c}
\n_{e_1}e_1=-\n_{e_2}e_2=p e_0,\qquad  
\n_{e_1}e_0=-p e_1,\qquad 
\n_{e_2}e_0=-p e_2\\
F_{102}=F_{120}=F_{201}=F_{210}=\frac{1}{2}\ta^*_0=-p.
\end{array}
\end{equation}
%
Also, there are computed the components $R_{ijkl}$ of $R$   
and $\rho_{ij}$ of $\rho$.
The non-zero of them are determined by the following equalities  and the property $R_{ijkl}=-R_{jikl}=-R_{ijlk}$: 
\begin{equation*}\label{res}
\begin{array}{l}
-R_{0101}=R_{0202}=R_{1212}=\frac12\rho_{00}=\frac12\rho_{11}
=-\frac12\rho_{22}=-p^2.
\end{array}
\end{equation*}
Moreover, the values of 
$\tau$ 
	and the sectional curvatures $k_{ij}=k(e_i,e_j)$ are 
\begin{equation*}\label{curvF5}
\begin{array}{l}
	\frac16 \tau=k_{12}=k_{01}=k_{02}=-p^2.
\end{array}
\end{equation*}
It is shown that the constructed manifold is Einstein because $\rho=-2p^2 g$ holds, \ie \eqref{defEl} is valid for constants 
\begin{equation}\label{abcF5}
(a,b,c)=(-2p^2,0,0).
\end{equation}

According to the results for $\n\xi$ in \eqref{FijkF5}, we make sure that $\xi$ is a torse-forming vector field with constant 
\begin{equation}\label{fF5}
f=-p.
\end{equation}

\section{Ricci-like solitons on almost contact B-metric manifolds}\label{sect-1}
%

A pseudo-Riemannian manifold $(M,g)$ is called a \emph{Ricci soliton} if it admits a smooth non-zero vector field $v$ 
on $M$ such that \cite{Ham82}
\begin{equation*}\label{Rs}
\begin{array}{l}
\frac12 \mathcal{L}_v g + \rho + \lm\, g =0, 
\end{array}
\end{equation*}
where $\mathcal{L}$ denotes the Lie derivative, 
$\rho$ stands for the Ricci tensor field and $\lm$ is a constant.  
A Ricci soliton is called \emph{shrinking}, \emph{steady} or \emph{expanding} according to whether $\lm$ is negative, zero or positive, respectively \cite{ChoLuNi}.
In the trivial case when $v$ is a Killing vector field,  
the Ricci soliton is an Einstein manifold. 

A generalization of the Ricci soliton on an almost contact metric manifold $(M,\f,\xi,\eta,g)$ for example is 
the \emph{$\eta$-Ricci soliton}  defined 
by \cite{ChoKim}
\begin{equation*}\label{eRs}
\begin{array}{l}
\frac12 \mathcal{L}_v g  + \rho+ \lm\, g  + \nu\, \eta\otimes\eta=0, 
\end{array}
\end{equation*}
where $\nu$ is also a constant. Obviously, an $\eta$-Ricci soliton with $\nu=0$ is a Ricci soliton.

Further in the present paper, $(M,\f,\xi,\eta,g)$ stands for an almost contact B-metric manifold.

Due to the presence of two associated metrics $g$ and $\g$ on $M$, 
we introduce a generalization of the Ricci soliton and the $\eta$-Ricci soliton 
as follows. 
\begin{definition}
An almost contact B-metric manifold $\M$ is called a
\emph{Ricci-like soliton} with potential vector field $\xi$ if its Ricci tensor $\rho$ satisfies the following condition for a triplet of constants $(\lm,\mu,\nu)$
\begin{equation}\label{defRl}
\begin{array}{l}
\frac12 \mathcal{L}_{\xi} g  + \rho + \lm\, g  + \mu\, \g  + \nu\, \eta\otimes \eta =0. 
\end{array}
\end{equation}
\end{definition}

For the Lie derivative of $g$ along $\xi$ we have the following expression
\begin{equation}\label{Lg=nxi}
\left(\mathcal{L}_{\xi} g \right)(x,y)=g(\n_x \xi,y)+g(x,\n_y \xi)
\end{equation}
and then \eqref{defRl} implies for the scalar curvature the following
\[
\tau=-\Div\xi-(2n+1)\lm -\mu-\nu.
\]

\begin{proposition}\label{prop-RlEl}
Let $\M$ with a Ricci tensor $\rho$ be Einstein-like with constants $(a,b,c)$ and let the manifold admit a Ricci-like soliton with potential $\xi$ and constants $(\lm,\mu,\nu)$.
Then the following properties are valid:
\begin{enumerate}
	\item[(i)] $a+b+c+\lm+\mu+\nu=0$;

	\item[(ii)] $\xi$ is geodesic, \ie $\n_{\xi}\xi=0$;

	\item[(iii)] $\left(\n_{\xi} \f\right)\xi=0$, $\n_{\xi} \eta=0$, $\om=0$;

	\item[(iv)] $\left(\n_{\xi}\rho\right)(y,z)=b\,g\bigl(\left(\n_{\xi}\f\right)y,z\bigr)$.

\end{enumerate}
\end{proposition}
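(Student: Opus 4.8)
The plan is to feed the soliton identity \eqref{defRl}, rewritten through \eqref{Lg=nxi} as
$\frac12\bigl(g(\n_x\xi,y)+g(x,\n_y\xi)\bigr)+\rho(x,y)+\lm\,g(x,y)+\mu\,\g(x,y)+\nu\,\eta(x)\eta(y)=0$,
into successively more restrictive evaluations on $\xi$, using the Einstein-like values \eqref{roEl1} at each step. All four parts are linked by a logical chain (i)$\to$(ii)$\to$(iii),(iv), so the order of proof is what matters.

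For (i), I would set $x=y=\xi$. The Lie-derivative term is $g(\n_\xi\xi,\xi)$, which vanishes since $\eta(\n_x\xi)=0$ from \eqref{strM2} forces $g(\n_\xi\xi,\xi)=0$. Using $\rho(\xi,\xi)=a+b+c$ from \eqref{roEl1}, the scalar evaluations $g(\xi,\xi)=1$, $\g(\xi,\xi)=g(\xi,\f\xi)+\eta(\xi)^2=1$ (because $\f\xi=0$), and $\eta(\xi)=1$, the whole identity collapses to $(a+b+c)+\lm+\mu+\nu=0$, which is exactly (i).

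For (ii), I would set $x=\xi$ with $y$ arbitrary. Here $g(\xi,\n_y\xi)=\eta(\n_y\xi)=0$ again by \eqref{strM2}, leaving only $\frac12 g(\n_\xi\xi,y)$ from the Lie-derivative term. Using $\rho(\xi,y)=(a+b+c)\eta(y)$ from \eqref{roEl1} together with $g(\xi,y)=\eta(y)$ and $\g(\xi,y)=g(\xi,\f y)+\eta(y)=\eta(y)$ (the latter from $\eta\circ\f=0$), the non-derivative contributions assemble into $\bigl[(a+b+c)+\lm+\mu+\nu\bigr]\eta(y)$, which vanishes by (i). Hence $g(\n_\xi\xi,y)=0$ for every $y$, and non-degeneracy of $g$ yields $\n_\xi\xi=0$.

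The remaining parts then follow formally. For (iii): $(\n_\xi\f)\xi=\n_\xi(\f\xi)-\f(\n_\xi\xi)=0$ since $\f\xi=0$ and $\n_\xi\xi=0$; next $(\n_\xi\eta)(y)=g(\n_\xi\xi,y)=0$ (using the identity from \eqref{F-prop}) gives $\n_\xi\eta=0$; and $\om(z)=F(\xi,\xi,z)=g\bigl((\n_\xi\f)\xi,z\bigr)=0$ by \eqref{t} and \eqref{F=nfi}. For (iv), I would substitute $x=\xi$ into \eqref{roEl2}: the bracketed summand carries the factor $g(\n_\xi\xi,\cdot)$, which vanishes by (ii), leaving precisely $(\n_\xi\rho)(y,z)=b\,g\bigl((\n_\xi\f)y,z\bigr)$. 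The computation is essentially mechanical; there is no serious analytic obstacle, and the only point requiring care is the structural one just noted — establishing (i) first, since it is exactly the cancellation that eliminates the $\eta(y)$-coefficient in (ii), and (ii) is the engine that drives (iii) and (iv).
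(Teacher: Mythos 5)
Your proposal is correct and follows essentially the same route as the paper: combine \eqref{defEl}, \eqref{defRl} and \eqref{Lg=nxi}, evaluate on $\xi$ using $\eta(\n_x\xi)=0$ to get (i) and then (ii), derive (iii) from $\f\xi=0$, \eqref{F-prop} and \eqref{t}, and obtain (iv) by setting $x=\xi$ in \eqref{roEl2}. The only cosmetic difference is that you split the substitution into two steps ($x=y=\xi$ for (i), then $x=\xi$ for (ii)), whereas the paper sets $y=\xi$ once and reads off both conclusions.
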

\begin{proof}
Using \eqref{defEl}, \eqref{defRl} and \eqref{Lg=nxi}, we obtain 
\begin{equation}\label{ElRl}
\begin{array}{l}
g(\n_x \xi,y)+g(\n_y \xi,x)=-2\{(a+\lm)g(x,y)+(b+\mu)g(x,\f y)\\
\phantom{g(\n_x \xi,y)+g(\n_y \xi,x)=-2\{}
+(b+c+\mu+\nu)\eta(x)\eta(y)\}.
\end{array}
\end{equation}
Substituting $y$ for $\xi$ in \eqref{ElRl}, we obtain (i) and immediately after that (ii).
Bearing in mind \eqref{F=nfi}, \eqref{F-prop} and \eqref{t}, the property (ii) is equivalent to the vanishing of  
$\left(\n_{\xi} \f\right)\xi$, $\n_{\xi} \eta$ and $\om$, i.e. (iii) is valid. 
On the other hand, \eqref{roEl2} and $\n_{\xi} \xi=0$ yields (iv).
\end{proof}

Taking into account the characterization of the basic classes $\F_i$, $i\in\{1,\A\dots,\A11\}$ of $\M$ by the components of $F$, given in \cite{HM1}, and the assertions (iii) and (iv) of the latter proposition, we deduce the following
\begin{corollary}
Let $\M$ satisfy the hypothesis of \propref{prop-RlEl}. Then, we have: 
\begin{enumerate}
	\item[(i)] The manifold does not belong to $\F_{11}$ or to its direct sum with other basic classes.
	\item[(ii)] The Ricci tensor is parallel along $\xi$ if and only if the manifold does not belong to $\F_{10}$ or to its direct sum with other basic classes.
	\item[(iii)] The Ricci tensor is parallel along $\xi$ if and only if the manifold is $\eta$-Einstein.
\end{enumerate}
\end{corollary}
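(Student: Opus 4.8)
The plan is to read off each assertion from the classification of the basic classes $\F_i$ by the components of the tensor $F$, as tabulated in \cite{HM1}, combined with the two pointwise identities supplied by \propref{prop-RlEl}, namely $\om=0$ and $\nabla_\xi\eta=0$ from (iii), and the formula $\left(\n_{\xi}\rho\right)(y,z)=b\,g\bigl(\left(\n_{\xi}\f\right)y,z\bigr)$ from (iv). The key translation is that all three statements concern only the $F(\xi,\cdot,\cdot)$ part of the structure tensor, i.e. the behaviour of the covariant derivative of $\f$ along the Reeb direction, so the whole proof reduces to inspecting which basic classes contribute a nonzero $F(\xi,\cdot,\cdot)$.

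For part (i), I would recall that $\om(z)=F(\xi,\xi,z)$ and that, among the eleven basic classes, the class $\F_{11}$ is precisely the one characterised by $F=\eta\otimes\eta\otimes\om$ with $\om\neq0$ (equivalently, $F(\xi,\xi,z)$ is the only nonvanishing component). Since \propref{prop-RlEl}(iii) forces $\om=0$, a manifold satisfying the hypothesis cannot have an $\F_{11}$-component: if it did, the $\F_{11}$-part of $F$ would have to vanish, contradicting that the component is nonzero on a genuine $\F_{11}$ summand. This gives the non-membership in $\F_{11}$ or any direct sum containing it.

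For parts (ii) and (iii), the object to examine is $\n_\xi\rho$. By (iv) it vanishes if and only if $g\bigl(\left(\n_{\xi}\f\right)y,z\bigr)=0$ for all $y,z$ (when $b\neq0$; the case $b=0$ I would handle by noting that then the manifold is already $\eta$-Einstein and $\n_\xi\rho=0$ automatically, so both equivalences hold trivially). The term $\left(\n_\xi\f\right)y$ is governed by the $F(\xi,\cdot,\cdot)$ components, and the classification shows that the only basic class whose defining components are supported on $F(\xi,\cdot,\cdot)$ with $\f$-anti-invariant behaviour — after $\om=0$ and $\n_\xi\eta=0$ have already been imposed — is $\F_{10}$. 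Thus $\n_\xi\f=0$ exactly when there is no $\F_{10}$-component, giving (ii); and combining this with (iv), $\n_\xi\rho=0$ is then equivalent to $b\,g\bigl((\n_\xi\f)y,z\bigr)=0$, which in turn (via the characterisation of $\eta$-Einstein through $b=0$ already used in \corref{cor:ElSl}(iv)) is equivalent to the manifold being $\eta$-Einstein, yielding (iii).

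The main obstacle I anticipate is not conceptual but bookkeeping: correctly identifying, from the component tables of \cite{HM1}, precisely which basic classes feed into the $F(\xi,\cdot,\cdot)$ slot once $\om=0$ and $\n_\xi\eta=0$ are assumed, and disentangling the roles of $\F_{10}$ and $\F_{11}$ so that $\F_{11}$ is the one killed by $\om=0$ while $\F_{10}$ is the one detected by $\n_\xi\f$. The subtle point in (iii) is the interplay with the constant $b$: one must argue the equivalence separately in the degenerate case $b=0$, where $\n_\xi\rho$ vanishes regardless of $\n_\xi\f$ and the manifold is $\eta$-Einstein by definition, so that the stated biconditional still holds. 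Once these class identifications are pinned down, each of the three equivalences follows by directly matching the vanishing conditions to the defining conditions of the relevant basic classes.
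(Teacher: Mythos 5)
Your proposal is correct and follows essentially the same route as the paper, which offers nothing beyond the one\-/sentence deduction from the component tables of \cite{HM1} together with parts (iii) and (iv) of \propref{prop-RlEl}: $\om=0$ kills the $\F_{11}$-component, and $\left(\n_{\xi}\rho\right)(y,z)=b\,g\bigl(\left(\n_{\xi}\f\right)y,z\bigr)$ ties parallelism of $\rho$ along $\xi$ to the $\F_{10}$-component and to $b=0$. Your explicit flagging of the degenerate case $b=0$ in (ii) is more care than the paper takes; the residual tension there (a manifold with $b=0$ but a nonzero $\F_{10}$-component still has $\n_\xi\rho=0$, so the stated biconditional in (ii) is loose) is an imprecision inherited from the statement itself rather than a defect of your argument.
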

%

\subsection{Ricci-like solitons on Sasaki-like almost contact B-metric manifolds}\label{sec:RlSl}

\begin{theorem}\label{thm:RlSl}
Let $\M$ be a $(2n+1)$-dimensional Sasaki-like manifold and let $a$, $b$, $c$, $\lm$, $\mu$, $\nu$ be constants that satisfy the following equalities:
\begin{equation}\label{SlElRl-const}
a+\lm=0,\qquad b+\mu-1=0,\qquad c+\nu+1=0.
\end{equation}
Then, the manifold admits 
a Ricci-like soliton with potential $\xi$ and constants $(\lm,\A\mu,\A\nu)$, where $\lm+\mu+\nu=-2n$,
if and only if 
it is Einstein-like with constants $(a,b,c)$, where $a+b+c=2n$.

In particular, we get:
\begin{enumerate}
	\item[(i)]    The manifold admits an $\eta$-Ricci soliton with potential $\xi$ and constants $(\lm,-2n-\lm)$ if and only if
the manifold is Einstein-like with constants $(-\lm,1,\lm+2n-1)$. 

	\item[(ii)]   The manifold admits a shrinking Ricci soliton with potential $\xi$ and constant $-2n$ if and only if 
the manifold is Einstein-like with constants $(2n,1,-1)$. 

	\item[(iii)]   The manifold is $\eta$-Einstein with constants $(a,2n-a)$  if and only if
it admits a Ricci-like soliton with potential $\xi$ and constants $(-a,1,a-2n-1)$. 

	\item[(iv)]   The manifold is Einstein with constant $2n$ if and only if
it admits a Ricci-like soliton with potential $\xi$ and constants $(-2n,1,-1)$. 
\end{enumerate}
\end{theorem}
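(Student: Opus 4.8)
The plan is to collapse the Ricci-like soliton equation \eqref{defRl} into a pointwise algebraic identity by exploiting the rigid form of $\n\xi$ on a Sasaki-like manifold, and then to match that identity term-by-term against the Einstein-like condition \eqref{defEl}. Both implications come out of the same substitution read in the two directions, so the statement is genuinely an equivalence established by one computation rather than two separate arguments.

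First I would compute the symmetric tensor $\LL_{\xi} g$. By \eqref{Lg=nxi} together with the Sasaki-like identity $\n_x\xi=-\f x$ from \eqref{curSl} and the symmetry $g(\f x,y)=g(x,\f y)$ from \eqref{strM2}, one gets $(\LL_{\xi} g)(x,y)=-2\,g(x,\f y)$. Recalling that the associated metric satisfies $g(x,\f y)=\g(x,y)-\eta(x)\eta(y)$, this rewrites as $\tfrac12\LL_{\xi} g=-\g+\eta\otimes\eta$. This is the crux of the argument: it converts the differential soliton condition into a purely tensorial one expressed in the same three building blocks $g$, $\g$, $\eta\otimes\eta$ that appear in \eqref{defEl}.

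Substituting this into \eqref{defRl} and collecting the coefficients of $g$, $\g$ and $\eta\otimes\eta$ yields $\rho=-\lm\,g-(\mu-1)\g-(\nu+1)\eta\otimes\eta$. Comparing with \eqref{defEl} shows that the soliton condition is equivalent to the Einstein-like condition precisely under the linear relations $a=-\lm$, $b=1-\mu$, $c=-(\nu+1)$, which are exactly \eqref{SlElRl-const}; since these relations are linear with constant coefficients, constancy of one triplet is equivalent to constancy of the other. For the trace normalizations I would invoke the Sasaki-like value $\rho(\xi,\xi)=2n$ from \eqref{curSl}, equivalently $a+b+c=2n$ from \eqref{tauElSl2}; adding the three relations in \eqref{SlElRl-const} gives $a+b+c=-(\lm+\mu+\nu)$, so the constraints $a+b+c=2n$ and $\lm+\mu+\nu=-2n$ are one and the same and each holds automatically. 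This settles both directions of the equivalence simultaneously.

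Finally, the four special cases are obtained by specializing the relations \eqref{SlElRl-const}: imposing $\mu=0$ gives the $\eta$-Ricci soliton of (i); adding $\nu=0$ gives the Ricci soliton of (ii), which is shrinking since then $\lm=-2n<0$; imposing $b=0$ (hence $\mu=1$) gives the $\eta$-Einstein case (iii); and $b=c=0$ (hence $a=2n$) gives the Einstein case (iv). I expect no serious obstacle — the whole proof is a single linear substitution — and the only point requiring genuine care is the bookkeeping of the two scalar normalizations, making sure they coincide rather than secretly impose an extra condition.
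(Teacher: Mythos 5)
Your proposal is correct and follows essentially the same route as the paper: substitute $\n_x\xi=-\f x$ into \eqref{Lg=nxi} to reduce \eqref{defRl} to $\rho=-\lm g+(1-\mu)\g-(1+\nu)\eta\otimes\eta$, match coefficients with \eqref{defEl} to get \eqref{SlElRl-const}, obtain the trace normalizations from $\rho(\xi,\xi)=2n$, and specialize for (i)--(iv). You merely spell out a few intermediate steps (the identity $\tfrac12\LL_\xi g=-\g+\eta\otimes\eta$ and the summation of the three linear relations) that the paper leaves implicit.
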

\begin{proof}
Taking into account the expression of $\n_x \xi$ from \eqref{curSl} and applying it in \eqref{Lg=nxi} and \eqref{defRl},
we obtain 
that condition \eqref{defRl} 
is equivalent to 
\begin{equation}\label{SlRl-rho}
\rho=-\lm g+(1-\mu)\g-(1+\nu)\eta\otimes\eta,
\end{equation}
which coincides with  \eqref{defEl} under conditions \eqref{SlElRl-const}. 
The equality $a+b+c=2n$ is known from \eqref{tauElSl2}, whereas
$\lm+\mu+\nu=-2n$ is a consequence of \eqref{SlRl-rho} for $\rho(x,\xi)$ and the corresponding formula from \eqref{curSl}.
Therefore, the main assertion is valid.

The assertions (i), (ii), (iii) and (iv) are corollaries of the main assertion for the cases 
$\mu=0$, $\mu=\nu=0$, $b=0$ and $b=c=0$, respectively. 
\end{proof}



\subsubsection{Example of an Ricci-like soliton on the manifold}\label{ex1-R}
Let us return to the example of Sasaki-like almost contact B-metric manifold $(G,\f,\allowbreak{}\xi,\allowbreak{}\eta,\allowbreak{}g)$ from \S\,\ref{ex1}, which is also $\eta$-Einstein, according to \S\,\ref{ex1-E}.

Now, let us check whether it admits a Ricci-like soliton with potential $\xi$.
Using \eqref{strEx1}, \eqref{nijEx1} and \eqref{Lg=nxi}, 
we compute the components $\left(\LL_\xi g\right)_{ij}=\left(\LL_\xi g\right)(e_i,e_j)$ and
the non-zero of them are the following
\begin{equation}\label{Lex1}
\begin{array}{l}
\left(\LL_\xi g\right)_{13}=\left(\LL_\xi g\right)_{24}=\left(\LL_\xi g\right)_{31}=\left(\LL_\xi g\right)_{42}=2.
\end{array}
\end{equation}
After that, taking into account 
\eqref{strEx1}, \eqref{Rex1}  and \eqref{Lex1}, we obtain that \eqref{defRl} is satisfied for 
\begin{equation}\label{lmnS}
(\lm,\mu,\nu)=(0,1,-5)
\end{equation}
 and $(G,\f,\allowbreak{}\xi,\allowbreak{}\eta,\allowbreak{}g)$
admits a Ricci-like soliton with potential $\xi$.

\begin{remark}
The constructed 5-dimensional $\eta$-Einstein Sasaki-like manifold $(G,\f,\allowbreak{}\xi,\eta,g)$ admitting a Ricci-like soliton with potential $\xi$  
has relevant constants, determined by \eqref{abcS} and \eqref{lmnS}, that satisfy \eqref{SlElRl-const} and therefore this example
supports the case (iii) of \thmref{thm:RlSl} and the rest results in \S\,\ref{sec:RlSl}.
\end{remark}

\subsection{Ricci-like solitons with torse-forming potential on almost contact B-metric manifolds}\label{sec:Rltf}

\begin{theorem}\label{thm:Rltf}
Let $\xi$ on $\M$, $\dim M=2n+1$,  be torse-forming with function $f$.
The manifold admits 
a Ricci-like soliton with potential $\xi$ and constants $(\lm,\mu,\nu)$
if and only if 
the manifold is Einstein-like with constants $(a,b,c)$
provided that
$f$ is a constant and the following equalities are satisfied:
\begin{equation}\label{tfElRl-const}
a+\lm+f=0,\qquad b+\mu=0,\qquad c+\nu-f=0.
\end{equation}

In particular, we have:
\begin{enumerate}
	\item[(i)]    The manifold admits an $\eta$-Ricci soliton with potential $\xi$ and constants $(\lm,\nu)$ if and only if
the manifold is $\eta$-Einstein with constants $(a,c)$, where the equality  $a+c+\lm+\nu=0$ is valid. 

	\item[(ii)]   The manifold admits a Ricci soliton with potential $\xi$ and constant $\lm$ if and only if 
the manifold is $\eta$-Einstein with constants $(-\lm-f,f)$. 

	\item[(iii)]   The manifold is Einstein with constant $a$ if and only if
it admits an $\eta$-Ricci with potential $\xi$ and constants $(-a-f,f)$. 
\end{enumerate}
\end{theorem}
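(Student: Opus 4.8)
The plan is to reduce the soliton equation \eqref{defRl} to the Einstein-like form \eqref{defEl} by evaluating the Lie-derivative term directly from the torse-forming hypothesis. First I would feed \eqref{tf} into \eqref{Lg=nxi}: since $\left(\LL_\xi g\right)(x,y)=g(\n_x\xi,y)+g(x,\n_y\xi)=\left(\n_x\eta\right)(y)+\left(\n_y\eta\right)(x)$ and the one-form derivative $\left(\n_x\eta\right)(y)=-f\,g(\f x,\f y)$ is already symmetric in $x$ and $y$, I obtain $\tfrac12\left(\LL_\xi g\right)(x,y)=-f\,g(\f x,\f y)$. Rewriting $g(\f x,\f y)=-g(x,y)+\eta(x)\eta(y)$ by \eqref{strM} expresses this purely through $g$ and $\eta\otimes\eta$:
\[
\tfrac12\left(\LL_\xi g\right)(x,y)=f\,g(x,y)-f\,\eta(x)\eta(y).
\]

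Next I would substitute this into \eqref{defRl} and solve for $\rho$. Because the Lie-derivative term is now carried entirely by $g$ and $\eta\otimes\eta$, while the $\mu\,\g$ term is already in the desired form, no conversion between $g(\cdot,\f\cdot)$ and $\g$ is needed, and collecting coefficients gives at once
\[
\rho=-(f+\lm)\,g-\mu\,\g+(f-\nu)\,\eta\otimes\eta.
\]
This is precisely \eqref{defEl} under the identifications $a=-(f+\lm)$, $b=-\mu$, $c=f-\nu$, which are exactly the relations \eqref{tfElRl-const}. Since $\lm,\mu,\nu$ are constants by the definition of the soliton, the coefficients $a,b,c$ are constants if and only if $f$ is constant; this is where the proviso enters and turns the computed identity into a genuine biconditional between the two constant-coefficient structures. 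The converse direction is obtained by reading the same chain of equalities backwards, substituting the Einstein-like $\rho$ and the torse-forming $\tfrac12\LL_\xi g$ back into \eqref{defRl}.

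The special cases then follow by specialization of the main equivalence. Case (i) is $\mu=0$, which forces $b=0$ and hence the $\eta$-Einstein form, with $a+c+\lm+\nu=0$ read off by adding the first and third equalities of \eqref{tfElRl-const}. Case (ii) is $\mu=\nu=0$, so $b=0$ and $c=f$, i.e. $\eta$-Einstein with constants $(-\lm-f,f)$. Case (iii) is the Einstein condition $b=c=0$, which by \eqref{tfElRl-const} forces $\mu=0$ and $\nu=f$, yielding an $\eta$-Ricci soliton with constants $(-a-f,f)$.

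The computation itself is routine; the only points demanding care are the sign bookkeeping coming from $\f^2=-\I+\eta\otimes\xi$ in passing from $g(\f x,\f y)$ to $g$ and $\eta\otimes\eta$. The genuine conceptual step — rather than a technical obstacle — is recognizing that the constancy of $f$ is exactly the compatibility condition matching the \emph{a priori} variable torse-forming datum $f$ with the constant triplets $(a,b,c)$ and $(\lm,\mu,\nu)$; without it one would only recover a pointwise Einstein-like relation with non-constant coefficients rather than an Einstein-like structure in the sense of \eqref{defEl}.
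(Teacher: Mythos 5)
Your proposal is correct and follows essentially the same route as the paper: substitute the torse-forming condition \eqref{tf} into \eqref{Lg=nxi}, solve \eqref{defRl} for $\rho=-(\lm+f)g-\mu\g-(\nu-f)\eta\otimes\eta$, match coefficients against \eqref{defEl} to get \eqref{tfElRl-const} together with the constancy of $f$, and specialize for (i)--(iii). Your version merely makes the Lie-derivative computation and the sign bookkeeping more explicit than the paper does.
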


\begin{proof}
If we have a Ricci-like soliton with torse-forming potential $\xi$ on $\M$, 
then the first equality of \eqref{tf}, \eqref{defRl} and \eqref{Lg=nxi}  yield the form of its Ricci tensor as follows
\begin{equation*}\label{tfRl-rho}
\rho=-(\lm+f) g-\mu\g-(\nu-f)\eta\otimes\eta.
\end{equation*}
According to  \eqref{defEl}, the latter equality shows that the manifold is Einstein-like
if and only if $f$ is a constant. In this case 
\eqref{tfElRl-const} are valid.

Vice versa, let $\M$ have torse-forming $\xi$ with function $f$ and let the manifold be Einstein-like with constants $(a,b,c)$.
Then, applying \eqref{tf} and \eqref{defEl} in \eqref{Lg=nxi} and \eqref{defRl} for constants $(\lm,\mu,\nu)$, 
we establish that condition \eqref{defRl} is satisfied if and only if \eqref{tfElRl-const} holds. Therefore, $f$ is a constant.

The particular cases (i), (ii) and (iii) follow from the main assertion for  
$b=\mu=0$, $b=\mu=\nu=0$ and  $b=c=\mu=0$, respectively. 
\end{proof}

\begin{proposition}\label{prop-RlEltf}
Let $\M$ be Einstein-like with constants $(a,b,c)$ and let it admit a Ricci-like soliton with constants $(\lm,\mu,\nu)$ and a potential $\xi$ which is torse-forming with constant $f$.
Then $f$ is determined by:
\begin{equation}\label{tf1}
f=\varepsilon\sqrt{-\frac{a+b+c}{2n}}=\varepsilon\sqrt{\frac{\lm+\mu+\nu}{2n}}, \qquad \varepsilon=\pm 1.
\end{equation}
\end{proposition}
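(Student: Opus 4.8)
The plan is to compute the Ricci tensor on the pair $(\xi,\xi)$ in two independent ways and match the results. Since the manifold is assumed Einstein-like with constants $(a,b,c)$, the last equality of \eqref{roEl1} gives at once $\rho(\xi,\xi)=a+b+c$. On the other hand, the torse-forming hypothesis should let me evaluate the radial curvature operator $x\mapsto R(x,\xi)\xi$ explicitly, and tracing it will express $\rho(\xi,\xi)$ as a multiple of $f^2$. Equating the two expressions isolates $f$, and the equality of the two radicands in \eqref{tf1} will come from the relation between the two triplets.

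First I would record that relation. Adding the three scalar equalities in \eqref{tfElRl-const} provided by \thmref{thm:Rltf} (applicable here, since both the Einstein-like and the soliton hypotheses hold), the contributions $+f$ and $-f$ cancel and one obtains $\lm+\mu+\nu=-(a+b+c)$. This already accounts for the coincidence of the two expressions under the radical in \eqref{tf1}; it then suffices to determine either one of them.

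The computational core is a curvature identity. Writing $Sx=\n_x\xi=-f\,\f^2x$ for the shape operator of $\xi$ and using $R(x,y)\xi=(\n_xS)y-(\n_yS)x$, I would differentiate the tensor $\f^2=-\I+\eta\otimes\xi$ to get $(\n_x\f^2)y=(\n_x\eta)(y)\,\xi+\eta(y)\,\n_x\xi$. Substituting $(\n_x\eta)(y)=-f\,g(\f^2x,y)$ and $\n_x\xi=-f\,\f^2x$ from \eqref{tf}, using that $f$ is constant (so that no $\D f$ terms appear), and invoking the symmetry $g(\f^2x,y)=g(\f^2y,x)$, the $\xi$-components cancel and one is left with $R(x,y)\xi=f^2\bigl\{\eta(y)\,\f^2x-\eta(x)\,\f^2y\bigr\}$. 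Setting $y=\xi$ and recalling $\f^2\xi=0$ yields $R(x,\xi)\xi=f^2\,\f^2x$. I expect this to be the main obstacle, since it is precisely where the torse-forming normalization, the constancy of $f$, and the algebraic identities \eqref{strM} must all be combined correctly to force the superfluous terms to vanish.

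Finally I would take the trace. Since $\tr\f^2=-2n$ by \eqref{strM} (as $\tr\I=2n+1$ and $\tr(\eta\otimes\xi)=\eta(\xi)=1$), the radial operator gives $\rho(\xi,\xi)=\tr\bigl(x\mapsto R(x,\xi)\xi\bigr)=-2n\,f^2$; this trace convention is the one consistent with the Sasaki-like value $\rho(\xi,\xi)=2n$ recorded in \eqref{curSl}. Equating with $\rho(\xi,\xi)=a+b+c$ gives $f^2=-\dfrac{a+b+c}{2n}$, and substituting $a+b+c=-(\lm+\mu+\nu)$ from the first step turns this into $f^2=\dfrac{\lm+\mu+\nu}{2n}$. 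Taking square roots and absorbing the undetermined sign into $\varepsilon=\pm1$ produces both forms in \eqref{tf1}, completing the proof.
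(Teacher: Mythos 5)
Your proof is correct and follows essentially the same route as the paper: both derive $R(x,y)\xi=f^2\{\eta(x)y-\eta(y)x\}$ from the torse-forming condition with constant $f$, trace to get $\rho(\cdot,\xi)=-2nf^2\eta(\cdot)$, compare with the Einstein-like value $a+b+c$, and then invoke the relation $a+b+c+\lm+\mu+\nu=0$ (which you obtain by summing \eqref{tfElRl-const}, while the paper cites (i) of \propref{prop-RlEl} directly). The only difference is that you spell out the curvature computation via $(\n_xS)y-(\n_yS)x$, which the paper leaves implicit.
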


\begin{proof}
 %
%
%
The expression of $\n\xi$ in \eqref{tf} for contant $f$ implies  
\begin{equation}\label{Rxif}
R(x,y)\xi=f^2\{\eta(x)y-\eta(y)x\},\qquad \rho(x,\xi)=-2nf^2\eta(x).
\end{equation}
Comparing the second equality of \eqref{Rxif} 
to the fourth one in \eqref{roEl1}, we get the equation
$-2n f^2=a+b+c$. The latter equality and (i) of \propref{prop-RlEl} imply \eqref{tf1}.
\end{proof}

The first equality of \eqref{Rxif} implies the following
\begin{remark}
The $\xi$-sections $\al=\Span\{x,\xi\}$ of $\M$, where $\xi$ is torse-forming with constant $f$, 
have a constant negative sectional curvature  $k(\al)=-f^2$.
\end{remark}

The following corollary follows from the expression $\tau=(2n+1)a$ for an Einstein manifold, \propref{prop-RlEltf} and assertion (iii) of \propref{thm:Rltf}. 
\begin{corollary}\label{cor:E}
The manifold  $\M$, $\dim M=2n+1$, is Einstein with a negative scalar curvature $\tau$ if and only if 
the manifold admits an $\eta$-Ricci soliton with potential $\xi$ 
and constants $(\lm=-\frac{\tau}{2n+1}-f, \nu=f)$, 
where $f$ is the constant for the torse-forming  vector field $\xi$ 
and $f=\varepsilon\sqrt{\frac{-\tau}{2n(2n+1)}}$, $\varepsilon=\pm 1$.
\end{corollary}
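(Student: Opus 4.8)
The plan is to assemble the corollary from three facts already established, all under the standing hypothesis that $\xi$ on $\M$ is torse-forming with constant $f$ (the trivial case $f\equiv 0$ being excluded). First I would recall that the Einstein condition means $\rho=a\,g$ for a constant $a$; tracing this identity with respect to $g$ over a basis $\{e_i;\xi\}$ gives $\tau=(2n+1)a$, hence $a=\frac{\tau}{2n+1}$.

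Next I would feed this into assertion (iii) of \thmref{thm:Rltf}, which asserts that the manifold is Einstein with constant $a$ if and only if it admits an $\eta$-Ricci soliton with potential $\xi$ and constants $(\lm,\nu)=(-a-f,f)$. Substituting $a=\frac{\tau}{2n+1}$ turns these into $\lm=-\frac{\tau}{2n+1}-f$ and $\nu=f$, which are precisely the constants in the statement. This delivers the equivalence between the Einstein condition and the existence of the prescribed $\eta$-Ricci soliton, once the value of $f$ is pinned down.

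To pin down $f$, I would invoke \propref{prop-RlEltf}. In the Einstein case $b=c=0$, so the formula $f=\varepsilon\sqrt{-\frac{a+b+c}{2n}}$ collapses to $f=\varepsilon\sqrt{-\frac{a}{2n}}$; substituting $a=\frac{\tau}{2n+1}$ yields $f=\varepsilon\sqrt{\frac{-\tau}{2n(2n+1)}}$ with $\varepsilon=\pm 1$. The sign condition on $\tau$ then enters automatically: since $\xi$ is nontrivially torse-forming, $f$ must be a nonzero real number, which forces $\frac{-\tau}{2n(2n+1)}>0$, i.e. $\tau<0$. This is exactly the source of the negative scalar curvature hypothesis.

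There is no genuinely hard step here; the only thing requiring care is the bookkeeping of the two directions of the equivalence. The forward direction (Einstein with $\tau<0$ $\Rightarrow$ soliton) reads off $\lm$, $\nu$ and $f$ from $a$ and $\tau$ through the two cited results. The converse ($\eta$-Ricci soliton with the stated constants $\Rightarrow$ Einstein) uses \thmref{thm:Rltf}(iii) in reverse to recover $\rho=a\,g$ from $\lm=-a-f$ and $\nu=f$, after which \propref{prop-RlEltf} and $\tau=(2n+1)a$ confirm the stated value of $f$ and the inequality $\tau<0$. Both directions reduce to direct substitutions once the three ingredients are in place, so I expect the proof to be a short synthesis rather than a new computation.
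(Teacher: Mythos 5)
Your proposal is correct and follows exactly the paper's own (one-sentence) argument: the corollary is assembled from $\tau=(2n+1)a$ for an Einstein manifold, assertion (iii) of Theorem~\ref{thm:Rltf}, and Proposition~\ref{prop-RlEltf} specialized to $b=c=0$. Your additional remark that the nonvanishing of the real constant $f$ forces $\tau<0$ is a correct and welcome clarification of why the negativity hypothesis appears, but it does not change the route.
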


For the obtained $\eta$-Ricci soliton in \corref{cor:E}, it is clear that the sign of $\nu$ depends on $\varepsilon$.  
Then, we differ the following cases:

1) If $\tau<-1-\frac{1}{2n}$, then the both values of $\lm$  are positive, whereas $\nu$ is either positive or negative. 

2) If $\tau=-1-\frac{1}{2n}$, then $(\lm,\nu)$ is either $\left(\frac{1}{n},-\frac{1}{2n}\right)$ 
or $\left(0,\frac{1}{2n}\right)$.

3) If $-1-\frac{1}{2n}<\tau<0$, then  one value of $\lm$ is positive and the other is negative, 
with the sign of $\nu$ opposite that of $\lm$.



Since any $\F_5$-manifold has a torse-forming vector field $\xi$, we conclude the following
\begin{proposition}\label{prop-RlEltfRs}
Let $\M\in\F_5\setminus\F_0$ be Einstein-like with constants $(a,b,c)$ and let it admit a Ricci-like soliton with constants $(\lm,\mu,\nu)$ and a potential $\xi$. Then we have:
\begin{enumerate}

\item[(i)]  It is Ricci-symmetric if and only if it is an Einstein manifold.  

\item[(ii)] Its Ricci tensor  is $\eta$-parallel  and parallel along $\xi$. 
\end{enumerate}
\end{proposition}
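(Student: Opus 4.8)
The plan is to reduce both statements to a single closed expression for the covariant derivative of the Ricci tensor, exactly as was done in the Sasaki-like situation of \corref{cor:ElSl}. Since $\M\in\F_5$, its Reeb vector field $\xi$ is automatically torse-forming; because the manifold is Einstein-like and admits a Ricci-like soliton, \thmref{thm:Rltf} (and \propref{prop-RlEltf}) guarantees that the associated function $f$ is a constant, and $f\neq 0$ follows from $\M\notin\F_0$, since $f=0$ in \eqref{tfF5} would force $\n\f=0$ and hence $F=0$.

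First I would substitute the $\F_5$ identity \eqref{tfF5} for $\n\f$ and the torse-forming identity \eqref{tf} for $\n\xi$ into the general Einstein-like formula \eqref{roEl2}. Using $g(\f x,y)=g(x,\f y)$, $g(\xi,z)=\eta(z)$ and $g(\f x,\f y)=-g(x,y)+\eta(x)\eta(y)$ from \eqref{strM} and \eqref{strM2}, this collapses to the torse-forming analogue of \eqref{roElSl}, namely
\begin{equation*}
(\n_x\rho)(y,z)=-bf\{g(x,\f y)\eta(z)+g(x,\f z)\eta(y)\}-(b+c)f\{g(\f x,\f y)\eta(z)+g(\f x,\f z)\eta(y)\}.
\end{equation*}
This is the only genuine computation in the argument, and I expect it to be entirely routine.

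Both conclusions are then read directly off this formula. For (ii): every term carries a factor $\eta(y)$ or $\eta(z)$, so $(\n_x\rho)(y,z)=0$ whenever $x,y,z\in\ker\eta$, i.e. $\rho$ is $\eta$-parallel; and putting $x=\xi$ annihilates all four terms because $\f\xi=0$ and $\eta\circ\f=0$, so $\n_\xi\rho=0$ and $\rho$ is parallel along $\xi$. For the Einstein direction of (i), $b=c=0$ makes both coefficients vanish, so $\rho$ is parallel and the manifold is Ricci-symmetric.

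The one step I regard as the crux is the converse in (i). Assuming $\n\rho=0$ and using $f\neq 0$, I would set $z=\xi$ with $y\in\ker\eta$, which reduces the formula to $b\,g(x,\f y)-(b+c)\,g(x,y)=0$ for all $x$ and all $y\in\ker\eta$. Since $g$ is nondegenerate on $\ker\eta$, this forces the endomorphism $b\f-(b+c)\I$ to vanish on $\ker\eta$; because $\f^2=-\I$ there and so $\f$ carries no real eigenvalue, one gets $b=0$ and then $c=0$. Hence Ricci-symmetry is equivalent to the Einstein condition, reproducing in the present torse-forming $\F_5$ setting the equivalence already recorded for Einstein-like manifolds with torse-forming $\xi$.
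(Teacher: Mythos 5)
Your proposal is correct and follows essentially the same route as the paper: it substitutes \eqref{tf} and \eqref{tfF5} into \eqref{roEl2} to obtain exactly the closed expression for $\n\rho$ that the paper derives (the paper writes the overall factor as $a+\lm$, which equals $-f$ by \eqref{tfElRl-const}), and then reads off both assertions as in \corref{cor:ElSl}. Your treatment of the converse in (i), using $f\neq 0$ and the fact that $\f$ has no real eigenvalue on $\ker\eta$, is just a more explicit version of the step the paper leaves to the analogy with \corref{cor:ElSl}(ii).
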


\begin{proof}

By virtue of \eqref{roEl2}, \eqref{tf}, \eqref{tfF5} and the first equality of \eqref{tfElRl-const}, we get
\begin{equation*}\label{roElF5}
\begin{array}{l}
\left(\n_x\rho\right)(y,z)=(a+\lm)\bigl\{(b+c)\left\{g(\f x, \f y)\eta(z) +g(\f x, \f z)\eta(y)\right\} \\
\phantom{\left(\n_x\rho\right)(y,z)=(a+\lm)\bigl\{(b}
+b\left\{g(x,\f y)\eta(z) +g(x,\f z)\eta(y)\right\}\bigr\}.
\end{array}
\end{equation*}
Using the latter expression, in a similar way as in (ii) and (iii) of \corref{cor:ElSl}, we establish the truthfulness of the assertions.
\end{proof}

\subsubsection{Example of a Ricci-like soliton on an almost contact B-metric manifold with torse-forming $\xi$}\label{ex2-R}

Let us return to the $\F_5$-manifold $(L,\f,\xi,\eta,g)$ from \S\,\ref{ex2}, for which $\xi$ is a torse-forming vector field
with constant 
$f=-p$.

Using \eqref{FijkF5} and \eqref{Lg=nxi}, we compute the components $\left(\LL_\xi g\right)_{ij}$  and
the non-zero of them are the following  
\begin{equation}\label{Lex2}
\begin{array}{l}
\left(\LL_\xi g\right)_{11}=-\left(\LL_\xi g\right)_{22}=-2p.
\end{array}
\end{equation}

Taking into account 
\eqref{strL}  and \eqref{Lex2}, we obtain that \eqref{defRl} is satisfied for constants
\begin{equation}\label{lmnF5}
(\lm,\mu,\nu)=(p+2p^2,0,-p) 
\end{equation}
and $(L,\f,\allowbreak{}\xi,\allowbreak{}\eta,\allowbreak{}g)$
admits a Ricci-like soliton with torse-forming potential $\xi$.

\begin{remark}
The constructed Einstein manifold $(L,\f,\xi,\eta,g)$ admitting an $\eta$-Ricci soliton with torse-forming potential $\xi$  
has relevant constants, determined by \eqref{abcF5}, \eqref{fF5} and \eqref{lmnF5}, that satisfy \eqref{tfElRl-const} and therefore this example
supports the case (iii) of \thmref{thm:Rltf} and the rest results in \S\,\ref{sec:Rltf}.
\end{remark}

\subsection*{Acknowledgment}
The author was supported by projects MU19-FMI-020 and FP19-FMI-002 of the Scientific Research Fund,
University of Plovdiv Paisii Hilendarski, Bulgaria.


\begin{thebibliography}{99}

\bibitem{AyaYil}
G. Ayar, M. Y\i{}ld\i{}r\i{}m,
$\eta$-Ricci solitons on nearly Kenmotsu manifolds,
 {\it Asian-Eur. J. Math.} \textbf{12} (6) (2019) 2040002 (8 pages).


\bibitem{BagIng12}
C.\,S. Bagewadi, G. Ingalahalli, 
Ricci solitons in Lorentzian $\al$-Sasakian manifolds, 
{\it Acta Math.} \textbf{28} (2012) 59--68.


\bibitem{BagIng13}
C.\,S. Bagewadi, G. Ingalahalli,
Certain results on Ricci solitons in trans-Sasakian manifolds,
{\it J. Math.} \textbf{2013} art. ID 787408 (10 pages).


\bibitem{BagIngAsh13}
C.\,S. Bagewadi, G. Ingalahalli, S.\,R. Ashoka, 
A study on Ricci solitons in Kenmotsu manifolds, 
{\it Int. Sch. Res. Notices Geometry} \textbf{2013}  art. ID 412593 (2013)  (6 pages).


\bibitem{Bla15}
A.\,M. Blaga, 
$\eta$-Ricci solitons on para-Kenmotsu manifolds, 
{\it Balkan J. Geom. Appl.} \textbf{20} (2015) 1--13.


\bibitem{Bla16}
A.\,M. Blaga, 
$\eta$-Ricci solitons on Lorentzian para-Sasakian manifolds, 
{\it Filomat} \textbf{30} (2) (2016) 489--496.


\bibitem{BlaCra}
A.\,M. Blaga, M. Crasmareanu,
Torse-forming $\eta$-Ricci solitons in almost paracontact $\eta$-Ein\-stein geometry,
{\it Filomat}  \textbf{31} (2) (2017), 499--504.


\bibitem{BlaPer}
A.\,M. Blaga, S.\,Y. Perkta\c{s}, 
Remarks on almost $\eta$-Ricci solitons in $(\varepsilon)$-para Sasakian manifolds,
{\it Commun. Fac. Sci. Univ. Ank. Ser. A1 Math. Stat.}
\textbf{68} (2) (2019) 1621--1628. 


\bibitem{BlaPerAceErd}
A.\,M. Blaga, S.\,Y. Perkta\c{s}, B.\,E. Acet, F.\,E. Erdo\u{g}an,
$\eta$-Ricci solitons in $(\varepsilon)$-almost paracontact metric manifolds,
{\it Glas. Mat. Ser. III} \textbf{53} (73) (2018)  205--220.


\bibitem{CalPer}
G. Calvaruso, D. Perrone, 
Geometry of $H$-paracontact metric manifolds,
{\it Publ. Math. Debrecen} \textbf{86} (3-4) (2015) 325--346.
  


\bibitem{ChoKim}
J.\,T. Cho, M. Kimura, 
Ricci solitons and real hypersurfaces in a complex space form,
{\it T\^{o}hoku Math. J.} \textbf{61} (2) (2009) 205--212.


\bibitem{ChoLuNi}
B. Chow, P. Lu, L. Ni, 
Hamilton's Ricci Flow, 
Graduate Studies in Mathematics \textbf{77} (2006),
AMS Sci. Press, Providence, RI, USA.


\bibitem{GalCra}
C. G\u{a}lin, M. Crasmareanu,
From the Eisenhart problem to Ricci solitons $f$-Kenmotsu manifolds 
{\it Bull. Malays. Math. Sci. Soc.} \textbf{33} (3) (2010) 361--368.


\bibitem{GaMiGr}
G. Ganchev, V. Mihova, K. Gribachev, 
Almost contact manifolds with B-metric, 
{\it Math. Balkanica (N.S.)}  \textbf{7}  (1993) 261--276.


\bibitem{Ham82}
R.\,S. Hamilton, 
Three-manifolds with positive Ricci curvature, 
{\it J. Differential Geom.} \textbf{ 17} (1982) 255--306.  




%



\bibitem{IngBag}
G. Ingalahalli, C.\,S. Bagewadi, 
Ricci solitons in $\al$-Sasakian manifolds, 
Int. Sch. Res. Notices Geometry \textbf{2012} art. ID 421384 (2012) (13 pages).


\bibitem{IvMaMa45}
S. Ivanov, H. Manev, M. Manev, 
Sasaki-like almost contact complex Rie\-mann\-ian manifolds, 
{\it J. Geom. Phys.}  \textbf{105} (2016) 136--148.





\bibitem{HM1}
H. Manev,
On the structure tensors of almost contact B-metric manifolds, 
{\it Filomat} \textbf{29} (3) (2015) 427--436.


\bibitem{HMMek}
H. Manev, D. Mekerov,
Lie groups as 3-dimensional almost contact B-metric manifolds, 
{\it J. Geom.} \textbf{106} (2015) 229--242.


%


\bibitem{ManGri93}
M. Manev, K. Gribachev, 
Contactly conformal transformations
of almost contact manifolds with B-metric
{\it Serdica Math. J.} \textbf{19} (1993) 287--299.


\bibitem{ManNak15}
M. Manev, G. Nakova,
Curvature properties of some three-dimensional almost contact B-metric manifolds,
{\it Plovdiv Univ. Sci. Works -- Math.} \textbf{34} (3) (2004) 51--60. 


\bibitem{Mat}
K. Matsumoto, 
On Lorentzian paracontact manifolds, 
{\it Bull. Yamagata Univ. Natur. Sci.} \textbf{12} (1989) 151--156.



\bibitem{NagPre}
H.\,G. Nagaraja, C.\,R. Premalatha, 
Ricci solitons in Kenmotsu manifolds, 
{\it J. Math. Anal.} \textbf{3} (2) (2012) 18--24.


\bibitem{PraHad}
D.\,G. Prakasha, B.\,S. Hadimani, 
$\eta$-Ricci solitons on para-Sasakian manifolds, 
{\it J. Geom.} \textbf{108} (2) (2017) 383--392.


\bibitem{Shar}
R. Sharma, 
Certain results on K-contact and $(\kappa,\mu)$-contact manifolds, 
{\it J. Geom.} \textbf{89} (1-2) (2008) 138--147.

%




\end{thebibliography}
\end{document}